\documentclass{amsart}
\usepackage{amsbsy}
\usepackage{mathrsfs}
\usepackage{amsmath}
\usepackage{amsfonts}
\usepackage{amssymb}
\usepackage{graphicx}%
\newtheorem{theorem}{Theorem}[section]
\newtheorem{lemma}[theorem]{Lemma}

\theoremstyle{definition}

\theoremstyle{remark}
\newtheorem{remark}[theorem]{Remark}  
\numberwithin{equation}{section}

\usepackage{fancyhdr}
\usepackage{color}
\usepackage{fancyhdr}
\pagestyle{fancy}  
\fancyhf{} % 清除默认的页眉和页脚  
%\fancyhead[C]{你的论文题目} % 在页眉居中位置显示你的论文题目

%\fancyhead[C]{THE DISCREPANCY IN MIN-MAX STATISTICS BETWEEN TWO RANDOM MATRICES}
\fancyhead[CO]{\footnotesize  THE DISCREPANCY IN MIN-MAX STATISTICS BETWEEN TWO RANDOM MATRICES}

\fancyhead [CE]{\footnotesize Z. CHEN, Y. CHEN, C.WEI}
\allowdisplaybreaks[4]

%    Absolute value notation
\newcommand{\abs}[1]{\lvert#1\rvert}

%    Blank box placeholder for figures (to avoid requiring any
%    particular graphics capabilities for printing this document).

%\fancyhead[CO]{\footnotesize  SUPREMA OF CHAOSES AND THE R.I.P.}

\begin{document}

	\title{The discrepancy in min-max statistics between two random matrices with finite third moments}

	%    Information for first author
	%    Information for second author
	%    Information for second author
	
	\author{Zijun Chen}
	\address{Institute for Financial Studies, Shandong University,  Jinan,  250100, China.}
	\email{czj4096@gmail.com}

	\author{Yiming Chen$^*$}
	\address{Institute for Financial Studies, Shandong University,  Jinan,  250100, China.}
	\email{chenyiming960212@mail.sdu.edu.cn}
	
	\author{Chengfu Wei}
	\address{Institute for Financial Studies, Shandong University,  Jinan,  250100, China.}
	\email{chengfuwei1024@gmail.com}

	\subjclass[2020]{60F05; 60G15; 60G70; 60H05}

	\date{}

	\keywords{Min-max Statistics; Random Matrices; Gaussian Vectors; Stein's method; Coupling; Gaussian approximation}

	\begin{abstract}
	We propose a novel coupling inequality of the min-max type for two random matrices with finite absolute third moments, which generalizes the quantitative versions of the well-known inequalities by Gordon. Previous results have calculated the quantitative bounds for pairs of Gaussian random matrices. Through integrating the methods utilized by Chatterjee-Meckes and Reinert-Röllin in adapting Stein's method of exchangeable pairs for multivariate normal approximation, this study eliminates the Gaussian restriction on random matrices, enabling us to achieve more extensive results.
	\end{abstract}

	\maketitle

	\section{Main Result and Introduction}
	Let $X$,$X^{\prime}$ be $n \times m$ independent random matrices with mean zero and finite 
		absolute third moments, that is, $\mathbb{E}[X_{i,j}]=\mathbb{E}[X_{i,j}^{\prime}]=0$,  $\mathbb{E}[{|X_{i,j}|}^3]<\infty$ and $\mathbb{E}[{|X_{i,j}^{\prime}|}^3]<\infty$ for all $1 \leq i \leq n$ and $1 \leq j \leq m$. The goal in this paper is to quantitatively study the difference in the distribution of min-max statistics between two random matrices. %This is our main result.
	
	 This problem originated from the comparison of the expected values of the maximums of two Gaussian processes by Sudakov\cite{ref10},\cite{ref9} and Fernique\cite{ref11}. Similar comparisons of extreme value statistics involving two different Gaussian objects are important results in the field of probability theory, especially in the study of Gaussian processes. This is because it provides a tool for bounding and comparing two families of non-independent random variables. The classical Sudakov-Fernique inequality states that, let $\xi \sim N\left(0, \Sigma^{\xi}\right)$  and  $\eta \sim N\left(0, \Sigma^{\eta}\right)$  be independent  n-dimensional Gaussian vectors. Suppose that for all $1\leq i,j \leq n$, we have 
		$$\mathbb{E}[|\xi_i-\xi_j|^2] \geq \mathbb{E}[|\eta_i-\eta_j|^2, $$
	then $$\mathbb{E}[\max_{1 \leq i \leq n}\xi_i] \geq \mathbb{E}[\max_{1 \leq i \leq n}\eta_i].$$ 
	
	 The Sudakov-Fernique inequality sparked further research and development in the theory of stochastic processes. Vitale \cite{ref13} replaced the assumption that the expectation of each element is zero with the weaker condition that $ \mathbb{E}\left[\left(X_1, \ldots, X_n\right)\right]=\mathbb{E}\left[\left(Y_1, \ldots, Y_n\right)\right]$. Gordon \cite{ref14}\cite{ref15} and Kahane \cite{ref16} have also explored comparable inequalities within the broader context of higher-dimensional tensors $(X_{i_1,i_2,\cdots,i_d})$. In Gordon and Kahane's extended framework, the traditional minimum used in the classic Sudakov-Fernique inequality is substituted with expressions such as $\min\limits_{i_1} \max\limits_{i_2} \cdots X_{i_1,i_2,\cdots,i_d}$. It was discovered that this approach can be used to estimate the smallest singular value of the random matrix with independent Gaussian elements, as found in \cite{ref18}. Moreover, the majorizing measures theory developed by Fernique and Talagrand \cite{ref17}, was inspired by the Sudakov-Fernique inequality, for more details, please refer to \cite{ref3}.%{\color{red}!!!!!!!!!!}.
	
	The study on the quantitative version of the Sudakov-Fernique inequality has also attracted significant attention. Chaterjee \cite{ref12} obtained an asymptotically sharp error bound for the maximum of a finite collection of Gaussian random variables. Peccati and Turchi\cite{ref8} extend such quantitative bounds to the setting considered by Gordon \cite{ref14},\cite{ref15} of min-max statistics of Gaussian random matrices.

	\begin{theorem}[\cite{ref8}]\label{101}
	Let $X$ and $X^{\prime}$ are $n \times m$ independent Gaussian matrices with mean zero, that is,  $\mathbb{E}[X_{i,j}]=\mathbb{E}[X_{i,j}^{\prime}]=0$. For $1 \leq i_1,i_2 \leq n$ and $1 \leq j_1,j_2 \leq m$, let $\sigma=\max\limits_{i_1,j_1,i_2,j_2}|\mathbb{E}[X_{i_1,j_1}X_{i_2,j_2}-X_{i_1,j_1}^{\prime}X_{i_2,j_2}^{\prime}]|$. There exist a constant $ C > 0 $ such that for all borel subset $A \subset \mathbb{R}$ we have $$\mathbb{P}(|\min_{1 \leq i \leq n}\max_{1 \leq j \leq m}X_{i,j}-\min_{1 \leq i \leq n}\max_{1 \leq j \leq m}X_{i,j}^{\prime}|>5\epsilon ) \leq C \frac{\log nm}{\tau^2}\sigma,$$ for all $\tau >0$.
	\end{theorem}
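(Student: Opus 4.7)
The plan is to combine three standard ingredients: a log-sum-exp smoothing of the non-smooth min-max functional, a Slepian-type Gaussian interpolation with Gaussian integration by parts, and a smoothing of indicators to pass from expectation-level comparisons to probability-level ones.

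First I would introduce the $C^\infty$ surrogate
\[
F_\beta(X) := -\frac{1}{\beta}\log\sum_{i=1}^{n}\Bigl(\sum_{j=1}^{m} e^{\beta X_{i,j}}\Bigr)^{-1},\qquad \beta>0,
\]
which approximates $F(X):=\min_{i}\max_{j}X_{i,j}$ uniformly with error $O(\log(nm)/\beta)$. Its first partials $\pi_{ij}(X):=\partial_{ij}F_\beta(X)$ are non-negative and sum to one (they encode a soft argmin--argmax), while a direct computation shows that the total absolute Hessian sum $\sum_{(i,j),(k,l)}\bigl|\partial^{2}_{ij,kl}F_\beta\bigr|$ is only of order $\beta$ rather than $\beta\cdot nm$.

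Next, for a test function $\phi\in C^{2}(\mathbb{R})$, I would compare $\mathbb{E}[\phi(F_\beta(X))]$ with $\mathbb{E}[\phi(F_\beta(X'))]$ via the interpolation $Z(t):=\sqrt{t}\,X+\sqrt{1-t}\,X'$. Gaussian integration by parts gives
\[
\frac{d}{dt}\mathbb{E}[\phi(F_\beta(Z(t)))] = \tfrac{1}{2}\sum_{(i,j),(k,l)}\bigl(\mathbb{E}[X_{ij}X_{kl}]-\mathbb{E}[X'_{ij}X'_{kl}]\bigr)\,\mathbb{E}\bigl[\partial^{2}_{ij,kl}(\phi\circ F_\beta)(Z(t))\bigr].
\]
Decomposing $\partial^{2}(\phi\circ F_\beta)=\phi''(F_\beta)(\nabla F_\beta)(\nabla F_\beta)^{\top}+\phi'(F_\beta)\nabla^{2}F_\beta$, using $\sum\pi_{ij}=1$ together with the Hessian bound from step one, and integrating in $t$ gives
\[
\bigl|\mathbb{E}\phi(F_\beta(X))-\mathbb{E}\phi(F_\beta(X'))\bigr|\,\leq\, C\sigma\bigl(\|\phi''\|_\infty+\beta\|\phi'\|_\infty\bigr).
\]

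Finally, to reach probabilities I would take $\phi_\tau$ a standard $\tau$-mollification of the indicator of $A$, so that $\|\phi_\tau'\|_\infty\lesssim 1/\tau$ and $\|\phi_\tau''\|_\infty\lesssim 1/\tau^{2}$. Adding the approximation error $\|\phi_\tau'\|_\infty\cdot\|F-F_\beta\|_\infty\lesssim\log(nm)/(\tau\beta)$ from replacing $F$ by $F_\beta$, and balancing with $\beta\asymp \log(nm)/\tau$, yields the Kolmogorov-type rate $C\sigma\log(nm)/\tau^{2}$. Applied to the pair of half-line sets $(-\infty,t]$ and $(-\infty,t\pm 5\tau]$ for a suitable $t$, and combined via a union-bound argument exploiting the independence of $X$ and $X'$, this produces the stated probability estimate for $|F(X)-F(X')|>5\tau$.

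The main obstacle is the Hessian estimate in step one: although $F_\beta$ depends on $nm$ variables, one must show that $\sum_{(i,j),(k,l)}|\partial^{2}_{ij,kl}F_\beta|$ grows only linearly in $\beta$, not in $\beta\cdot nm$. This cancellation, which rests on the probabilistic interpretation of the gradient as a coupled soft argmin--argmax distribution, is precisely the mechanism that makes the matrix-size dependence enter the final bound only through $\log(nm)$.
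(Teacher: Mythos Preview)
Your first three ingredients---the log-sum-exp surrogate $F_\beta$, the Slepian interpolation with Gaussian integration by parts, and the crucial dimension-free Hessian estimate $\sum_{(i,j),(k,l)}|\partial^2_{ij,kl}F_\beta|=O(\beta)$---are correct and coincide with what the paper (following \cite{ref8}) uses. The indicator smoothing and the balancing choice $\beta\asymp\log(nm)/\tau$ are likewise the right moves, and together they do deliver the set-level inequality
\[
\mathbb{P}\bigl(F(X)\in A\bigr)\ \le\ \mathbb{P}\bigl(F(X')\in A^{5\tau}\bigr)+C\,\sigma\,\frac{\log(nm)}{\tau^{2}}\qquad\text{for every Borel }A\subset\mathbb{R}.
\]

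The gap is your last step. The passage from this set-level bound to $\mathbb{P}(|F(X)-F(X')|>5\tau)\le C\sigma\log(nm)/\tau^{2}$ cannot be achieved by ``a union-bound argument exploiting the independence of $X$ and $X'$''. To see why, set $\sigma=0$: then $F(X)$ and $F(X')$ have the same law, your right-hand side is $0$, yet if $F(X)$ and $F(X')$ are \emph{independent} and non-degenerate, $\mathbb{P}(|F(X)-F(X')|>5\tau)$ is bounded away from zero for small $\tau$. Closeness in distribution simply does not control the difference of independent copies, no matter how one slices half-lines.

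The statement must therefore be read as the existence of a \emph{coupling}, and the correct bridge from the set-level inequality to the pointwise one is Strassen's theorem (Lemma~\ref{kk4.2} in the paper): from $\mu(A)\le\nu(A^{5\tau})+\varepsilon$ for all Borel $A$ one obtains a random variable $\widetilde W\sim\nu$ on the same probability space with $\mathbb{P}(|V-\widetilde W|>5\tau)\le\varepsilon$. This is exactly how the paper closes the argument; the independence of the original matrices plays no role in this final step.
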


	Inspired by the works of Chatterjee \cite{ref6}, we aim to derive a quantitative coupling inequality for the min-max type of random matrices with a stronger condition. Specifically, we only require the existence of the third-order moments for the elements of the two random matrices to be compared. Our core technique is derived from the work of Chernozhukov, Chetverikov and Kato \cite{ref1}, which utilizes exchangeable pairs and symmetry through Stein's method to obtain the main results. Now we are ready to present our main result. %Furthermore, when the conditions are the same, we are able to recover the result of [\cite{ref8}].

\begin{theorem}\label{thm1}
		Let $X$,$X^{\prime}$ be $n \times m$ independent random matrices defined before. Then for every $\beta > 0$, $\delta > 0$ and $\tau >\frac{1}{\beta(1+\delta)}$, we have 
		$$\begin{aligned}
			&\mathbb{P}(|\min_{1 \leq i \leq n}\max_{1 \leq j \leq m}X_{i,j}-\min_{1 \leq i \leq n}\max_{1 \leq j \leq m}X_{i,j}^{\prime}|>2(\frac{\log n}{\beta\delta}\vee \frac{\log m}{\beta})+3\tau)\\		
			\leq& \dfrac{\varepsilon +C\beta(1+\delta)\tau^{-1}(B_1+B_1^{\prime}+B_3+\beta(1+\delta)(B_2+B_2^{\prime}))}{1-\varepsilon}.
		\end{aligned}$$
		
		Moreover, if $X$ and $X^\prime$ are Gaussian random matrices, we have $$\begin{aligned}
			&\mathbb{P}(|\min_{1 \leq i \leq n}\max_{1 \leq j \leq m}X_{i,j}-\min_{1 \leq i \leq n}\max_{1 \leq j \leq m}X_{i,j}^{\prime}|>2(\frac{\log n}{\beta\delta}\vee \frac{\log m}{\beta})+3\tau) \\
			\leq& \dfrac{\varepsilon +C\beta(1+\delta)\tau^{-1}B_3}{1-\varepsilon}.
		\end{aligned}$$ 
		
		where $\varepsilon=\varepsilon_{\beta,\delta,\tau}$ is given by
		
		\begin{align}\label{1}
			\varepsilon=\sqrt{e^{-\alpha}(1+\alpha)}<1,\quad \alpha=\beta^2(1+\delta)^2\tau^2-1>0,
		\end{align}	
		and
		
		$$\begin{aligned}
			&B_1=\mathbb{E}[\max\limits_{i_1,j_1,i_2,j_2}|X_{i_1,j_1}X_{i_2,j_2}-\mathbb{E}[X_{i_1,j_1}X_{i_2,j_2}]|],\\
			&B_1^{\prime}=\mathbb{E}[\max\limits_{i_1,j_1,i_2,j_2}|X_{i_1,j_1}^{\prime}X_{i_2,j_2}^{\prime}-\mathbb{E}[X_{i_1,j_1}^{\prime}X_{i_2,j_2}^{\prime}]|],\\
			&B_2=\mathbb{E}[\max\limits_{i,j}|X_{i,j}|^3],\\
			&B_2^{\prime}=\mathbb{E}[\max\limits_{i,j} |X_{i,j}^{\prime}|^3],\\
			&B_3=\max\limits_{i_1,j_1,i_2,j_2}|\mathbb{E}[X_{i_1,j_1}X_{i_2,j_2}-X_{i_1,j_1}^{\prime}X_{i_2,j_2}^{\prime}]|.
		\end{aligned}$$
		
	\end{theorem}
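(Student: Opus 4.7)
The plan is to combine a two-level smoothing with Stein's method of exchangeable pairs in the spirit of Chatterjee--Meckes and Reinert--R\"ollin.

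\textbf{Smoothing the min-max.} First I would replace $\min_i\max_j x_{i,j}$ by the log-sum-exp mollifier
$$F_{\beta,\delta}(x) = -\frac{1}{\beta\delta}\log\sum_{i=1}^{n}\Bigl(\sum_{j=1}^{m} e^{\beta x_{i,j}}\Bigr)^{-\delta},$$
which satisfies $F_{\beta,\delta}(x)-\min_i\max_j x_{i,j}\in[0,\tfrac{\log n}{\beta\delta}\vee\tfrac{\log m}{\beta}]$ uniformly in $x$. This absorbs the deterministic shift $2(\tfrac{\log n}{\beta\delta}\vee\tfrac{\log m}{\beta})$ on the left-hand side of the conclusion and reduces matters to comparing $F_{\beta,\delta}(X)$ and $F_{\beta,\delta}(X')$. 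The key analytic facts I would establish are the derivative estimates $\sum_{i,j}|\partial_{ij}F_{\beta,\delta}|\leq 1$, $\sum|\partial^{2}F_{\beta,\delta}|\lesssim\beta(1+\delta)$, and $\sum|\partial^{3}F_{\beta,\delta}|\lesssim\beta^{2}(1+\delta)^{2}$, obtained by writing each partial derivative as a polynomial in the probability weights $p_{i,j}\propto e^{\beta x_{i,j}}$ (inner softmax) and $q_i\propto(\sum_j e^{\beta x_{i,j}})^{-\delta}$ (outer softmin).

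\textbf{Smoothing the indicator.} Second I would convert the event $\{|F_{\beta,\delta}(X)-F_{\beta,\delta}(X')|>3\tau\}$ into the expectation of a smooth test function $h_\tau$ built from a Gaussian kernel of width of order $\tau$, so that $\|h_\tau^{(k)}\|_\infty\lesssim\tau^{-k}$. A Chebyshev-type sandwich of the form $\mathbb{P}(|Y|>3\tau)\leq \varepsilon+(1-\varepsilon)^{-1}\mathbb{E}[h_\tau(Y)]$ produces the prefactor $(1-\varepsilon)^{-1}$ and the additive $\varepsilon$ appearing in the statement. The specific expression $\varepsilon=\sqrt{e^{-\alpha}(1+\alpha)}$ is exactly the kernel tail bound with $\alpha=\beta^{2}(1+\delta)^{2}\tau^{2}-1$, which is positive precisely under the hypothesis $\tau>(\beta(1+\delta))^{-1}$.

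\textbf{Stein exchangeable pair.} With $\varphi=h_\tau\circ F_{\beta,\delta}$, the core estimate is
$$\bigl|\mathbb{E}[\varphi(X)]-\mathbb{E}[\varphi(X')]\bigr|\lesssim\beta(1+\delta)\tau^{-1}(B_1+B_1'+B_3)+\beta^{2}(1+\delta)^{2}\tau^{-1}(B_2+B_2').$$
I would build an exchangeable pair $(X,X^{*})$ by entrywise resampling from an independent copy and expand $\varphi(X)-\varphi(X^{*})$ to third order. After taking conditional expectations, the first-order term vanishes by the mean-zero assumption, the second-order term collapses to the covariance mismatch and contributes $B_3$ times the second-derivative norm $\lesssim\beta(1+\delta)\tau^{-1}$, while the cubic Taylor remainder yields $B_2+B_2'$ times the third-derivative norm $\lesssim\beta^{2}(1+\delta)^{2}\tau^{-1}$. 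The $B_1+B_1'$ contribution enters through the Chatterjee--Meckes/Reinert--R\"ollin remainder controlling $\|\mathbb{E}[(X-X^{*})(X-X^{*})^{T}\mid X]-2\lambda\Sigma\|$ by a maximum of centered quadratics. In the Gaussian case, Gaussian integration by parts (equivalently the Ornstein--Uhlenbeck interpolation $X_t=e^{-t}X+\sqrt{1-e^{-2t}}\tilde{X}$) collapses the quadratic and cubic remainders into the single covariance mismatch $B_3$, explaining the sharper second bound.

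\textbf{Main obstacle.} The delicate point is proving the derivative bounds for $F_{\beta,\delta}$ with the correct scaling $(\beta(1+\delta))^{k-1}$ rather than the naive $(\beta\delta\vee\beta)^{k-1}$, because the nested soft-min/soft-max structure creates cross terms involving both the inner weights $p_{i,j}$ and the outer weights $q_i$. I would handle this by expressing each derivative as a finite polynomial in $p$ and $q$ and controlling the resulting expressions by H\"older on the simplex; once these bounds are in place, combining them with the two smoothing layers produces the claimed inequality.
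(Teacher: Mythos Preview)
Your ingredients match the paper's, but there is a genuine gap at the indicator-smoothing step. You propose to smooth the single event $\{|F_{\beta,\delta}(X)-F_{\beta,\delta}(X')|>3\tau\}$ by a function $h_\tau$ and then set $\varphi=h_\tau\circ F_{\beta,\delta}$; these two choices are incompatible, because $h_\tau(F(X)-F(X'))$ depends on the pair $(X,X')$ while $\varphi(X)=h_\tau(F(X))$ depends on $X$ alone, and only the latter form is what the Stein comparison $|\mathbb{E}[\varphi(X)]-\mathbb{E}[\varphi(X')]|$ can handle. No Chebyshev-type sandwich connects the two. The paper instead invokes a Strassen-type lemma first: it suffices to prove, for every Borel $A\subset\mathbb{R}$, that $\mathbb{P}(\min_i\max_j X_{i,j}\in A)\le \mathbb{P}(\min_i\max_j X'_{i,j}\in A^{2\lambda+3\tau})+\text{error}$. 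The smooth $g$ then approximates the indicator $1_{A^\lambda}$ of a \emph{fixed} set via the sandwich $(1-\varepsilon)1_{A^\lambda}\le g\le\varepsilon+(1-\varepsilon)1_{A^{\lambda+3\tau}}$, and one compares $\mathbb{E}[g\circ F(X)]$ with $\mathbb{E}[g\circ F(X')]$, each a function of a single matrix. This is also why the correct derivative bounds on $g$ are $\|g'\|_\infty\le\tau^{-1}$, $\|g''\|_\infty\le C\phi\tau^{-1}$, $\|g'''\|_\infty\le C\phi^2\tau^{-1}$ with $\phi=\beta(1+\delta)$ (a kernel of width $\sim 1/\phi$, not $\sim\tau$): the precise $\varepsilon=\sqrt{e^{-\alpha}(1+\alpha)}$ with $\alpha=\phi^2\tau^2-1$ comes from that construction, not from a width-$\tau$ kernel.

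Two smaller corrections. First, $F_{\beta,\delta}(x)-\min_i\max_j x_{i,j}\in[0,\lambda]$ is false; the approximation is genuinely two-sided, $F-\min_i\max_j\in[-\tfrac{\log n}{\beta\delta},\tfrac{\log m}{\beta}]$, which is why the final threshold carries $2\lambda$ rather than $\lambda$. Second, in the Stein step the paper does not extract $B_3$ from the exchangeable pair built on $X$: it inserts Gaussian intermediaries $Y\sim N(0,\Sigma^X)$ and $Y'\sim N(0,\Sigma^{X'})$ and uses the decomposition $|\mathbb{E}[f(X)-f(Y)]|+|\mathbb{E}[f(Y)-f(Y')]|+|\mathbb{E}[f(Y')-f(X')]|$. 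The exchangeable-pair/Taylor argument (via the Stein solution $h$) bounds the two outer terms by $C\phi\tau^{-1}(B_1+\phi B_2)$ and $C\phi\tau^{-1}(B_1'+\phi B_2')$ respectively, while $B_3$ comes exclusively from the Gaussian interpolation $\sqrt{t}\,Y+\sqrt{1-t}\,Y'$ in the middle term.
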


%Inspired by Chatterjee \cite{ref6} and V. Chernozhukov, D. Chetverikov, and K. Kato \cite{ref1}, we develop a new coupling inequality for the min-max type of random matrix using exchangeable pairs and symmetry. Building on the work of Giovanni Peccati and Nicola Turchi \cite{ref8}, we employ smooth functions to approximate the measure and calculate the third-order remainder of the Taylor expansion through Stein's method. Additionally, our new theorem eliminates the need to truncate normal random vectors, thereby simplifying its application.
	\begin{remark}
			The elements in matrices are not necessarily independent, and it can be observed that if the elements in  Theorem \ref{thm1} degenerate to the Gaussian case, let $\beta=\log nm/\tau$ and $\delta=1$ we have $$\mathbb{P}	(\abs{X-X^{\prime}}>5\tau)\leq \frac{\varepsilon+C\frac{\log nm}{\tau^2}\sigma}{1-\varepsilon},$$ where $\varepsilon=\frac{2\sqrt{e}\log nm}{(nm)^{2\log{nm}}}\ll \frac{\log nm}{\tau^2}\sigma$. Obviously, it is consistent with the conclusion in Theorem \ref{101}.%\cite{ref8}.
		\end{remark}	
	\subsection{Notation}
	Throughout the paper, we will always work on a fixed probability space $(\Omega,\mathcal{F},\mathbb{P})$. We use the notation$\left\|f\right\|_{\infty}:=\sup_{t\in X}|f(t)|$. The indicator function of a subset $A$ of a set $X$ is denoted as $1_A: X \rightarrow\{0,1\}$. For a subset $A$ of semimetric space $(T,d)$, let $A^{\varepsilon}$ denote the $\varepsilon$-enlargement of $A$, that is $A^{\varepsilon}=\{x \in T:d(x,A) \leq \varepsilon\}$ where $d(x,A)=\inf_{y \in A}d(x,y)$. For $a,b \in \mathbb{R}$, we employ the notation $a \vee b=\max\{a,b\}$. Unless otherwise stated, $C>0$ denote universal constants of which the values may change from place to place.
	
	%Now, we are ready to prepare our main result.
	\section{proof of the main result}
	We divide the proof into several steps. Firstly, a version of Strassen's theorem below allows us to quantify the comparison of two random variables, then we approximate the nonsmooth map $x \mapsto 1_{A}(\min\limits_{1 \leq i \leq n} \max\limits_{1 \leq j \leq m} x_{i,j} )$ by a smooth function for any matrix $x\in \mathbb{R}^{n \times m}$,  whcih allows us to compare $\min\limits_{1 \leq i \leq n}\max\limits_{1 \leq j \leq m}X_{i,j}$ and $\min\limits_{1 \leq i \leq n}\max\limits_{1 \leq j \leq m}X^{\prime}_{i,j}$ by Stein's method.

	\begin{lemma}[Kato\cite{ref1} Lemma 4.2]\label{kk4.2}
		
		Let $\mu$ and $\nu$ be Borel probability measures on $\mathbb{R}$, and let $V$ be a random variable defined on a probability space $(\Omega,\mathcal{F},\mathbb{P})$ with distribution $\mu$. Suppose that the probability space $(\Omega,\mathcal{F},\mathbb{P})$ admits a uniform random variable on $(0, 1)$ independent of $V$. Let  $\varepsilon > 0$ and $\delta > 0$ be two positive constants. Then there exists a random variable $W$, defined on $(\Omega,\mathcal{F},\mathbb{P})$, with distribution $\nu$ such that	$\mathbb{P}	(|V - W| > \delta) \leq \varepsilon$ if and only if
		 $\mu(A) \leq \nu(A^\delta)	+ \varepsilon$ for every Borel subset	$A$ of $\mathbb{R}$.
	\end{lemma}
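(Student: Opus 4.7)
The plan is to prove the two directions of the equivalence separately. For the necessity ($\Rightarrow$), suppose such a $W$ exists. For any Borel $A \subset \mathbb{R}$, I decompose
$$\mu(A) = \mathbb{P}(V \in A,\, |V-W| \leq \delta) + \mathbb{P}(V \in A,\, |V-W| > \delta).$$
On the first event, $|V-W| \leq \delta$ together with $V \in A$ forces $W \in A^\delta$, so the first term is bounded by $\nu(A^\delta)$; the second term is at most $\varepsilon$ by assumption. Summing yields $\mu(A) \leq \nu(A^\delta) + \varepsilon$.

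For the sufficiency ($\Leftarrow$), the argument proceeds in two stages. First, I invoke Strassen's marginal theorem: the hypothesis $\mu(A) \leq \nu(A^\delta) + \varepsilon$ for every Borel $A$ is precisely the condition needed to produce a Borel probability measure $\pi$ on $\mathbb{R}^2$ with marginals $\mu$ and $\nu$ such that
$$\pi(\{(x,y) \in \mathbb{R}^2 : |x-y| > \delta\}) \leq \varepsilon.$$
This is the classical Strassen theorem, typically proved via a Hahn--Banach / linear programming duality on the space of signed measures, combined with tightness and approximation by finitely supported measures (on which it reduces to a max-flow / marriage-type combinatorial statement).

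Second, I realize $\pi$ as a joint law on the given probability space. Since $\mathbb{R}$ is Polish, there is a regular conditional distribution $\pi(\cdot \mid x)$ for the second coordinate given the first; let $F_x$ denote its CDF and $F_x^{-1}$ the corresponding quantile function. Setting $W := F_V^{-1}(U)$, with $U$ the given uniform $(0,1)$ random variable independent of $V$, makes $(V,W)$ have joint law $\pi$. In particular $W \sim \nu$ and $\mathbb{P}(|V-W| > \delta) = \pi(\{|x-y| > \delta\}) \leq \varepsilon$. Joint measurability of $(x,u) \mapsto F_x^{-1}(u)$ follows from standard disintegration arguments on Polish spaces.

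The main obstacle is the appeal to Strassen's theorem, whose proof is genuinely nontrivial, whereas the subsequent passage from the coupling measure $\pi$ to the coupling random variable $W$ via the independent uniform is routine once disintegration is in hand. In this context I would treat Strassen's theorem as a black box from the literature (e.g., Dudley, \emph{Real Analysis and Probability}) rather than reprove it.
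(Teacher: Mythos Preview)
Your proof is correct and follows the standard route: the necessity direction via the decomposition on $\{|V-W|\leq\delta\}$ is immediate, and the sufficiency direction correctly invokes Strassen's marginal theorem to obtain the coupling measure $\pi$, then realizes it on the given space via the quantile function of the regular conditional distribution applied to the independent uniform $U$.

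Note, however, that the paper does \emph{not} supply its own proof of this lemma: it is quoted verbatim from Chernozhukov--Chetverikov--Kato (their Lemma~4.2) and used as a black box. So there is no ``paper's own proof'' to compare against; your argument is precisely the standard one that underlies the cited result (see also Dudley or Pollard for the disintegration step). In that sense you have supplied what the paper omits, and your treatment of Strassen's theorem itself as a literature black box is consistent with how the paper handles the entire lemma.
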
	

	By the Lemma \ref{kk4.2}, it's clear that the conclusion follows if we can prove that for every Borel subset $A$, $$\begin{aligned}
		&\mathbb{P}(\min\limits_{1 \leq i \leq n}\max\limits_{1 \leq j \leq m}X_{i,j} \in A)\\
		 \leq&\mathbb{P}(\min\limits_{1 \leq i \leq n}\max\limits_{1 \leq j \leq m}X^{\prime} \in A^{2\lambda+3\tau})\\&+\frac{\varepsilon+C\beta(1+\delta)\tau^{-1}(B_1+B_1^{\prime}+B_3+\beta(1+\delta)(B_2+B_2^{\prime}))}{1-\varepsilon}.\\	\end{aligned}$$
	
Now we approximate the nonsmooth map by a smooth function with two step. We introduce a smooth function $F^{\beta\delta}(x):\mathbb{R}^{n \times m} \rightarrow \mathbb{R}$ to approximate the nonsmooth map $x \mapsto \min\limits_{1 \leq i \leq n }\max\limits_{1 \leq j \leq m} x_{i,j} )$ first, and then derive a appropriate estimation of indicator function.

	\begin{lemma}
		For $\beta, \delta>0$, $F^{\beta\delta}:\mathbb{R}^{n \times m} \rightarrow \mathbb{R}$ defined by
		
		$$F^{\beta \delta}(x)=-\frac{1}{\beta \delta}\log\sum_{i=1}^{n}(\sum_{j=1}^{m}\exp(\beta x_{i,j}))^{-\delta}.$$ For every $x \in \mathbb{R}^{n \times m}$
		
		\begin{align}\label{8}
		\min\limits_{1 \leq i \leq n} \max\limits_{1 \leq j \leq m} x_{i,j}-\frac{1}{\beta\delta}\log n \leq F^{\beta \delta} (x) \leq \min\limits_{1 \leq i \leq n} \max\limits_{1 \leq j \leq m} x_{i,j} + \frac{1}{\beta} \log m.
		\end{align}
		
	\end{lemma}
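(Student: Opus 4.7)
The plan is to decompose $F^{\beta\delta}$ into two nested soft-extrema and control each layer separately using the standard log-sum-exp sandwich bounds.

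First, I would introduce the row-wise soft-max
\begin{equation*}
L_i(x) := \frac{1}{\beta}\log\sum_{j=1}^{m}\exp(\beta x_{i,j}),
\end{equation*}
and rewrite $F^{\beta\delta}(x) = -\frac{1}{\beta\delta}\log\sum_{i=1}^{n}\exp(-\beta\delta L_i(x))$, so that $F^{\beta\delta}$ is a soft-min (with parameter $\beta\delta$) applied to the vector $(L_1,\dots,L_n)$. The proof then reduces to two elementary log-sum-exp estimates: for any reals $a_1,\dots,a_k$ and any $\gamma>0$,
\begin{equation*}
\max_{\ell} a_\ell \;\le\; \frac{1}{\gamma}\log\sum_{\ell=1}^{k}e^{\gamma a_\ell} \;\le\; \max_{\ell} a_\ell + \frac{\log k}{\gamma},
\end{equation*}
which I would verify directly by bounding the sum between its largest term and $k$ times its largest term.

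Second, I would apply this inequality twice. Applied with $\gamma=\beta$ to the $j$-index within row $i$, it gives
\begin{equation*}
\max_{1\le j\le m} x_{i,j} \;\le\; L_i(x) \;\le\; \max_{1\le j\le m} x_{i,j} + \frac{\log m}{\beta}.
\end{equation*}
Applied with $\gamma=\beta\delta$ to the $i$-index and in the form suitable for $\min$ (replace $a_\ell$ by $-a_\ell$), it gives
\begin{equation*}
\min_{1\le i\le n} L_i(x) - \frac{\log n}{\beta\delta} \;\le\; F^{\beta\delta}(x) \;\le\; \min_{1\le i\le n} L_i(x).
\end{equation*}
Chaining these two displays, using the monotonicity of $\min$ with respect to the pointwise inequality between $L_i(x)$ and $\max_j x_{i,j}$, yields the claimed sandwich \eqref{8}.

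There is no real obstacle here; the argument is a routine two-step log-sum-exp calculation and should take only a few lines once $L_i(x)$ is introduced. The only thing to be a little careful about is the sign and the order of the bounds when switching from the soft-max layer (which overshoots the true max from above) to the soft-min layer (which undershoots the true min from below), so that the errors $\frac{\log m}{\beta}$ and $\frac{\log n}{\beta\delta}$ land on the correct sides of $F^{\beta\delta}(x)$.
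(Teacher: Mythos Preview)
Your proposal is correct and is essentially the same argument as the paper's: both proofs rely on the two elementary log-sum-exp sandwich inequalities, first for the inner $\max_j$ layer with parameter $\beta$ and then for the outer $\min_i$ layer with parameter $\beta'=\beta\delta$, and then chain them. The only cosmetic difference is that you name the intermediate row-wise soft-max $L_i(x)$ explicitly and work from $F^{\beta\delta}$ outward, whereas the paper starts from $\min_i\max_j x_{i,j}$ and substitutes the bounds inward; the content is identical.
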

	\begin{remark}
		This function arises in the definition of free energy in spin glasses \cite{ref4}. From inequality (\ref{8}), we know that as $\beta$ and $\delta$ tend to $+\infty$, $F^{\beta \delta}(x)$ tends to $\min\limits_{1 \leq i \leq n} \max\limits_{1 \leq j \leq m} x_{i,j}$.
	\end{remark}
	\begin{proof}
		
		Let $d \in \mathbb{N}$ and $z \in \mathbb{R}^d$, when $\beta>0$ the following inequality holds
		$$
		\frac{1}{d} \sum_{i=1}^d e^{\beta z_i} \leq \max_{1 \leq i \leq d} e^{\beta z_i} \leq \sum_{i=1}^d e^{\beta z_i},
		$$
		in particular
		$$
		\frac{1}{\beta} \log \sum_{i=1}^d e^{\beta z_i}-\frac{\log d}{\beta} \leq \max_{1 \leq i \leq d}z_{i} \leq \frac{1}{\beta} \log \sum_{i=1}^d e^{\beta z_i} .
		$$
		
		Similarly for the minimum instead, it holds that, for every $\beta^{\prime}>0$,
		$$
		-\frac{1}{\beta^{\prime}} \log \sum_{i=1}^d e^{-\beta^{\prime} z_i} \leq \max_{1 \leq i \leq d} z_{i} \leq-\frac{1}{\beta^{\prime}} \log \sum_{i=1}^d e^{-\beta^{\prime} z_i}+\frac{\log d}{\beta^{\prime}} .
		$$
		
		This allows us to control its upper and lower bounds separately. For every $x \in \mathbb{R}^{n \times m}$
		$$\begin{aligned}
			\min_{1 \leq i \leq n}\max_{1 \leq j \leq m}x_{i,j}\geq&-\frac{1}{\beta^{\prime}}\log \sum_{i=1}^n \exp(-\beta^{\prime}\max_{1 \leq j \leq m}x_{i,j})\nonumber\\
			\geq&-\frac{1}{\beta^{\prime}}\log \sum_{i=1}^n \exp(-\beta^{\prime}\frac{1}{\beta}(\log \sum_{i=1}^n \exp(-\beta^{\prime}x_{i,j})-\log m))\nonumber\\
			=&-\frac{1}{\beta ^{\prime}}\log\sum_{i=1}^{n}\exp(-\beta^{\prime}\frac{1}{\beta}\log\sum_{j=1}^{m}\exp(\beta x_{i,j}))-\frac{1}{\beta}\log m,\nonumber
		\end{aligned}$$
		
		and
		
		\begin{align}
			\min_{1 \leq i \leq n}\max_{1 \leq j \leq m}x_{i,j}\leq&-\frac{1}{\beta^{\prime}}\log \sum_{i=1}^n \exp(-\beta^{\prime}\max_{1 \leq j \leq m}x_{i,j})+\frac{1}{\beta^{\prime}}\log n\nonumber\\
			\leq&-\frac{1}{\beta ^{\prime}}\log\sum_{i=1}^{n}\exp(-\beta^{\prime}\frac{1}{\beta}\log\sum_{j=1}^{m}\exp(\beta x_{i,j}))+\frac{1}{\beta^{\prime}}\log n.\nonumber
		\end{align}
		
		Notice that, for $\beta^{\prime}=\beta\delta$ we have
		$$-\frac{1}{\beta ^{\prime}}\log\sum_{i=1}^{n}\exp(-\beta^{\prime}\frac{1}{\beta}\log\sum_{j=1}^{m}\exp(\beta x_{i,j}))=F^{\beta\delta}(x),
		$$
		which concludes the proof.
	\end{proof}

%	\begin{remark}
%		We obtained a good approximation of c by the smooth function $F^{\beta\delta}(x)$.
%	\end{remark}
	Hence, for every Borel subset $A$ of $\mathbb{R}$ we have	
	$$\mathbb{P}(\min\limits_{1 \leq i \leq n}\max\limits_{1 \leq j \leq m}X_{i,j} \in A) \leq \mathbb{P}(F^{\beta \delta} (X) \in A^{\lambda}) = \mathbb{E}[1_{A^{\lambda}}(F^{\beta \delta} (X) )].$$
	We introduce the following lemma to approximate the indicator function.
	\begin{lemma}[Kato\cite{ref1} Lemma 4.2]\label{lem1}
		
	 Let  $\phi=\beta(1+\delta)>0$  and  $\tau>1 / \phi$ . For every Borel subset  $Q$  of  $\mathbb{R}$ , there exists a smooth function  $g: \mathbb{R} \rightarrow \mathbb{R}$  such that  $\left\|g^{\prime}\right\|_{\infty} \leq \tau^{-1}$, $\left\|g^{\prime \prime}\right\|_{\infty} \leq   C \phi \tau^{-1}$, $\left\|g^{\prime \prime \prime}\right\|_{\infty} \leq C \phi^{2} \tau^{-1}$ , and
		
		\begin{align}\label{4}
			(1-\varepsilon) 1_{Q} (t)\leq g(t) \leq \varepsilon+(1-\varepsilon) 1_{Q^{3\tau}}(t) \quad \forall t \in \mathbb{R},
		\end{align}

		where  $\varepsilon=\varepsilon_{ \beta, \delta , \tau}$  is given by (\ref{1}).
	\end{lemma}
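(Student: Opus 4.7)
The plan is to construct $g$ as the convolution of a $1/\tau$-Lipschitz ramp approximation of the indicator of a slightly enlarged $Q$ with a Gaussian density of bandwidth $\sigma := 1/\phi$. Writing $d(t,Q):=\inf_{q\in Q}|t-q|$, define
$$h(t) := \bigl(1 - \tau^{-1}(d(t,Q)-\tau)_+\bigr)_+,$$
so that $h\equiv 1$ on $Q^{\tau}$, $h\equiv 0$ outside $Q^{2\tau}$, $0\le h\le 1$, and $h$ is $1/\tau$-Lipschitz; then set $g(t):=\mathbb{E}[h(t+\sigma Z)]$ with $Z\sim N(0,1)$. The reason for the gap between $Q^{2\tau}$ and the $Q^{3\tau}$ that appears in (\ref{4}) is that the Gaussian kernel will contribute an extra $\tau$ of smearing beyond the support of $h$.

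For the derivative bounds I would write $g(t)=\int h(s)\,\varphi_\sigma(s-t)\,ds$, where $\varphi_\sigma$ is the $N(0,\sigma^2)$ density, and differentiate under the integral. A single integration by parts (legal because $\varphi_\sigma$ decays and $h$ is Lipschitz) shifts one derivative onto $h$: $g'(t)=\int h'(s)\varphi_\sigma(s-t)\,ds$, hence $\|g'\|_\infty\le \|h'\|_\infty\|\varphi_\sigma\|_1 = 1/\tau$. For the higher derivatives I would keep one factor of $h'$ and let the remaining derivatives fall on $\varphi_\sigma$, invoking the standard Hermite-polynomial $L^1$ bounds $\|\varphi_\sigma^{(k)}\|_1 \le C_k\sigma^{-k}$. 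With $\sigma=1/\phi$ this immediately produces $\|g''\|_\infty \le C\phi/\tau$ and $\|g'''\|_\infty \le C\phi^2/\tau$, matching the claimed constants.

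The sandwich inequality (\ref{4}) is then mechanical. For $t\in Q$, the triangle inequality gives $d(t+\sigma Z,Q)\le|\sigma Z|$, so $|Z|\le \phi\tau$ forces $h(t+\sigma Z)=1$ and $g(t)\ge \mathbb{P}(|Z|\le\phi\tau)$. For $t\notin Q^{3\tau}$, positivity of $h(t+\sigma Z)$ demands $d(t+\sigma Z,Q)<2\tau$, hence $|\sigma Z|>\tau$, giving $g(t)\le \mathbb{P}(|Z|>\phi\tau)$; the remaining case $t\in Q^{3\tau}\setminus Q$ is handled by the trivial estimate $g\le 1$. Both sides of (\ref{4}) therefore reduce to the single tail bound $\mathbb{P}(|Z|>\phi\tau)\le \varepsilon$.

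The one genuinely quantitative step, and the only real obstacle, is recovering the exact expression for $\varepsilon$ in (\ref{1}). For this I would apply the Chernoff-type inequality
$$\mathbb{P}(|Z|>y)\le (1-\lambda)^{-1/2}\,e^{-\lambda y^2/2},$$
valid for $0\le\lambda<1$ via $\mathbb{E}[e^{\lambda Z^2/2}]=(1-\lambda)^{-1/2}$. Optimizing in $\lambda$, the minimum is attained at $\lambda=1-1/y^2$ (which requires $y>1$, precisely the hypothesis $\phi\tau>1$) and equals $y\,e^{-(y^2-1)/2}$. Setting $y=\phi\tau$ and $\alpha=\phi^2\tau^2-1$ turns this into $\sqrt{(1+\alpha)e^{-\alpha}}=\varepsilon$, closing the argument.
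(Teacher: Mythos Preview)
The paper does not prove this lemma; it is quoted from \cite{ref1} without argument. Your construction is correct and is, in fact, essentially the standard one used in that reference: mollify a $1/\tau$-Lipschitz tent $h$ (equal to $1$ on $Q^{\tau}$ and vanishing outside $Q^{2\tau}$) by a Gaussian of bandwidth $\sigma=1/\phi$, then read off $\|g'\|_\infty\le\|h'\|_\infty$ and $\|g^{(k)}\|_\infty\le\|h'\|_\infty\|\varphi_\sigma^{(k-1)}\|_1\le C_k\phi^{k-1}/\tau$ for $k=2,3$. The sandwich (\ref{4}) and the identification of $\varepsilon$ via the optimized Chernoff bound $\mathbb{P}(|Z|>\phi\tau)\le \phi\tau\,e^{-(\phi^2\tau^2-1)/2}=\sqrt{(1+\alpha)e^{-\alpha}}$ are both handled correctly; the hypothesis $\phi\tau>1$ is exactly what makes the optimizer $\lambda=1-1/(\phi\tau)^2$ admissible and guarantees $\alpha>0$, hence $\varepsilon<1$.
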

	
	Apply Lemma \ref{lem1} to  $Q=A^{\lambda}$  to construct a suitable function  $g$. Then
	
	\begin{align}\label{10}
		\mathbb{E}\left[1_{A^{\lambda}}(F^{\beta\delta}(X) )\right] \leq(1-\varepsilon)^{-1} \mathbb{E}\left[g \circ F^{\beta\delta}(X)\right] .
	\end{align}

	 Then we ought to discuss the properties of the function $g \circ F^{\beta\delta}$, so that we can bound the right term of the inequality of (\ref{10}).%{\color{red}!!!!!!!!!!}
	
	\begin{lemma}\label{lem3} Let $\beta>0$ and $\delta>0$. For every thrice continuously differentiable function $g$, we have
		\begin{align}\label{2}\sum_{i_1,i_2=1}^{n}\sum_{j_1,j_2=1}^{m}\frac{\partial^2(g \circ F^{\beta\delta})(x)}{\partial x_{i_1,j_1}\partial x_{i_2,j_2}} \leq& \left\|g^{\prime \prime}\right\|_{\infty}+2\beta(1+\delta)\left\|g^{\prime}\right\|_{\infty},\\
		\label{3}\sum_{i_1,i_2,i_3=1}^{n}\sum_{j_1,j_2,j_3=1}^{m}\frac{\partial^3(g \circ F^{\beta\delta})(x)}{\partial x_{i_1,j_1}\partial x_{i_2,j_2}\partial x_{i_3,j_3}} \leq& \left\|g^{\prime \prime \prime}\right\|_{\infty}+6\beta(1+\delta)\left\|g^{\prime \prime}\right\|_{\infty}+6\beta^2(1+\delta)^2\left\|g^{ \prime}\right\|_{\infty}.
		\end{align}
		
	\end{lemma}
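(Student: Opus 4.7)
The plan is to exploit the probabilistic structure of the first derivatives of $F^{\beta\delta}$ and then apply the multivariate chain rule to $g\circ F^{\beta\delta}$. The whole argument reduces to two moment-style bounds on $F^{\beta\delta}$ itself, namely $\sum_{i_1,j_1,i_2,j_2}|\partial^2 F^{\beta\delta}/\partial x_{i_1,j_1}\partial x_{i_2,j_2}|\le 2\beta(1+\delta)$ and $\sum|\partial^3 F^{\beta\delta}|\le 6\beta^2(1+\delta)^2$; the stated inequalities (\ref{2}) and (\ref{3}) then follow by triangle inequality together with $\sum_{i,j}|\partial F^{\beta\delta}/\partial x_{i,j}|=1$.

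The key observation is that the first derivative is a probability measure. Setting $S_i=\sum_{j=1}^m e^{\beta x_{i,j}}$ and $T=\sum_{i=1}^n S_i^{-\delta}$, direct differentiation of $F^{\beta\delta}=-(\beta\delta)^{-1}\log T$ gives
$$\frac{\partial F^{\beta\delta}}{\partial x_{i,j}}=\frac{e^{\beta x_{i,j}}S_i^{-\delta-1}}{T}=:\pi_{i,j},$$
which is a probability distribution on $\{1,\dots,n\}\times\{1,\dots,m\}$ factoring as $\pi_{i,j}=p_iq_{j|i}$ with $p_i=S_i^{-\delta}/T$ and $q_{j|i}=e^{\beta x_{i,j}}/S_i$. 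A logarithmic-derivative computation then yields the key identity
$$\frac{\partial\pi_{i_2,j_2}}{\partial x_{i_1,j_1}}=\beta\pi_{i_2,j_2}\bigl[\mathbf{1}_{(i_1,j_1)=(i_2,j_2)}-(1+\delta)q_{j_1|i_2}\mathbf{1}_{i_1=i_2}+\delta\pi_{i_1,j_1}\bigr].$$
Summing the absolute values of the three bracketed terms over all four indices, using only the identities $\sum_{i,j}\pi_{i,j}=\sum_jq_{j|i}=\sum_ip_i=1$, produces contributions $\beta$, $\beta(1+\delta)$, and $\beta\delta$, totaling $2\beta(1+\delta)$. The standard chain-rule identity
$$\frac{\partial^2(g\circ F^{\beta\delta})}{\partial x_{i_1,j_1}\partial x_{i_2,j_2}}=g''(F^{\beta\delta})\pi_{i_1,j_1}\pi_{i_2,j_2}+g'(F^{\beta\delta})\frac{\partial^2 F^{\beta\delta}}{\partial x_{i_1,j_1}\partial x_{i_2,j_2}}$$
combined with $\sum_{i,j}\pi_{i,j}=1$ then gives (\ref{2}).

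For (\ref{3}) I differentiate once more. Writing $a_{(i_1,j_1),(i_2,j_2)}$ for the bracketed quantity above and noting the analogous identity $\partial q_{j|i}/\partial x_{i',j'}=q_{j|i}\,b_{(i',j'),(i,j)}$ with $\sum_{i',j'}|b|\le 2\beta$, the product rule expresses $\partial^3 F^{\beta\delta}/\partial x_{i_1,j_1}\partial x_{i_2,j_2}\partial x_{i_3,j_3}$ as $\pi_{i_3,j_3}\,a_{(i_1,j_1),(i_3,j_3)}\,a_{(i_2,j_2),(i_3,j_3)}$ plus $\pi_{i_3,j_3}\,\partial a_{(i_2,j_2),(i_3,j_3)}/\partial x_{i_1,j_1}$, and the latter derivative splits into a $q\,b$-piece (nonzero only when $i_2=i_3$) and a $\pi\,a$-piece. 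Applying the estimate $\sum_{i_1,j_1}|a_{(i_1,j_1),(i_2,j_2)}|\le 2\beta(1+\delta)$ twice bounds the first summand by $4\beta^2(1+\delta)^2$; the two subterms of the second contribute $2\beta^2(1+\delta)$ and $2\beta^2\delta(1+\delta)$, totaling $2\beta^2(1+\delta)^2$. Hence $\sum|\partial^3 F^{\beta\delta}|\le 6\beta^2(1+\delta)^2$. A three-term chain-rule expansion of $\partial^3(g\circ F^{\beta\delta})$ then produces one $\|g'''\|_\infty$ contribution, three symmetric $2\beta(1+\delta)\|g''\|_\infty$ contributions, and one $6\beta^2(1+\delta)^2\|g'\|_\infty$ contribution, yielding (\ref{3}).

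The main obstacle is purely bookkeeping, particularly in the third-derivative case where several sub-sums must be combined using the right mix of the identities $\sum\pi=\sum p=\sum q=1$ and the previously established bounds on $\sum|a|$ and $\sum|b|$. No delicate analytic tools are needed; the probabilistic interpretation $\pi_{i,j}=p_iq_{j|i}$ is what turns all the combinatorial sums into $O(1)$ quantities scaling polynomially in $\beta$ and $1+\delta$.
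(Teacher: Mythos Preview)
Your proposal is correct and follows essentially the same route as the paper: compute $\partial F^{\beta\delta}/\partial x_{i,j}=\pi_{i,j}$ as a probability measure factoring into a row marginal times a within-row softmax, bound $\sum|\partial^2 F^{\beta\delta}|\le 2\beta(1+\delta)$ and $\sum|\partial^3 F^{\beta\delta}|\le 6\beta^2(1+\delta)^2$ by summing the explicit terms, and then feed these into the chain-rule expansion of $g\circ F^{\beta\delta}$. The only cosmetic differences are that you swap the paper's roles of $p$ and $q$, and you organize the third-derivative bound via the logarithmic-derivative identity $\partial\pi=\pi\cdot a$ plus the product rule rather than writing out an explicit formula for $\gamma_{i_1,j_1,i_2,j_2,i_3,j_3}$; the resulting $4\beta^2(1+\delta)^2+2\beta^2(1+\delta)+2\beta^2\delta(1+\delta)=6\beta^2(1+\delta)^2$ split is identical to the paper's.
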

	
	\begin{proof}
		
		It's convenient to define the functions, for any $1 \leq i \leq n$ and $1 \leq j \leq m$ , $p_{i,j} , q_i : \mathbb{R}^{n \times m} \rightarrow \mathbb{R}$ by
		\begin{align}
			p_{i,j}(x):=\frac{\exp(\beta x_{i,j})}{\sum\limits_{j=1}^{m}\exp(\beta x_{i,j})},
		\end{align}

		$$q_i(x):=\dfrac{(\sum\limits_{j=1}^{m}\exp(\beta x_{i,j}))^{-\delta}}{\sum\limits_{i=1}^{n}(\sum\limits_{j=1}^{m}\exp(\beta x_{i,j}))^{-\delta}}.$$
		
		Let $\delta_{i,j}=\begin{cases}
			1 & \text{ if } i=j\\
			0 & \text{ if } i \ne j\\
		\end{cases}$. A direct computation shows that 
		
		$$\frac{\partial F^{\beta\delta}(x)}{\partial x_{i_1,j_1}}=\pi_{i_1,j_1}(x),$$
		
		$$\frac{\partial^2 F^{\beta\delta}(x)}{\partial x_{i_1,j_1}\partial x_{i_2,j_2}}=\beta\omega_{i_1,j_1,i_2,j_2}(x),$$
		$$\frac{\partial^3 F^{\beta\delta}(x)}{\partial x_{i_1,j_1}\partial x_{i_2,j_2}\partial x_{i_3,j_3}}=\beta^2\gamma_{i_1,j_1,i_2,j_2,i_3,j_3}(x),$$
		
		where
		
		$$\begin{aligned}
			\pi_{i_1,j_1}(x) =& p_{i_1,j_1}(x)q_{i_1}(x),\\
			\omega_{i_1,j_1,i_2,j_2}\left(x\right) =& \pi_{i_1,j_1}\left(x\right)\left(\delta \pi_{i_2,j_2}\left(x \right)+\delta_{i_1,i_2}\left(\delta_{j_1,j_2}-\left(1+\delta \right) p_{i_2,j_2}\left(x\right)\right)\right),\\
			\gamma_{i_1,j_1,i_2,j_2,i_3,j_3}(x) = &\omega_{i_1,j_1,i_2,j_2}\left(x\right)\left(\delta \pi_{i_3,j_3}\left(x\right)+\delta_{i_1,i_3}\left(\delta_{j_1,j_3}-\left(1+\delta \right)p_{i_3,j_3}\left(x\right)\right)\right)\\
				&+\delta \pi_{i_1,j_1}\left(x\right)\omega_{i_2,j_2,i_3,j_3}\left(x\right)\\
				&-\delta_{i_1,i_2}\delta_{i_2,i_3}(1+\delta)\pi_{i_1,j_1}(x)p_{i_2,j_2}(x)(\delta_{j_2,j_3}-p_{i_3,j_3}(x)).\\
		\end{aligned}$$
	
		Consider for every $x \in \mathbb{R}^{n \times m}$
		$$p_{i,j}(x) \geq 0, \quad q_{i}(x) \geq 0,\quad \sum_{i=1}^{n} q_i(x)=1, $$
		and for every $1 \leq i \leq n$ 
		$$\sum_{j=1}^{m}p_{i,j}(x)=1 .$$
		
		By these conditions, we have
		$$
			\pi_{i,j}(x) \geq 0,\quad \sum_{i=1}^{n}\sum_{j=1}^{m}\pi_{i,j}(x)=1,$$ $$\begin{aligned}
			\sum_{i_1,i_2=1}^{n}\sum_{j_1,j_2=1}^{m}|\omega_{i_1,j_1,i_2,j_2}(x)|=&\sum_{i_1,i_2=1}^{n}\sum_{j_1,j_2=1}^{m}|\pi_{i_1,j_1}(x)[\delta\pi_{i_2,j_2}(x)+\delta_{i_1,i_2}(\delta_{j_1,j_2}-(1+\delta)p_{i_2,j_2}(x))]|\\ \leq&\sum_{i_1,i_2=1}^{n}\sum_{j_1,j_2=1}^{m}\pi_{i_1,j_1}(x)[\delta \pi_{i_2,j_2}(x)+\delta_{i_1,i_2}(\delta_{j_1,j_2}+(1+\delta)p_{i_2,j_2}(x))]\\
			\leq& 2(1+\delta),			
		\end{aligned}$$
		similarly,
		$$\sum_{i_1,i_2,i_3=1}^{n}\sum_{j_1,j_2,j_3=1}^{m}|\gamma_{i_1,j_1,i_2,j_2,i_3,j_3}(x)| \leq 4(1+\delta)^2+2\delta(1+\delta)+2(1+\delta) = 6(1+\delta)^2.
		$$

		Inequalities (\ref{2}) and (\ref{3})  follow from these relations and the following computation :
		$$\begin{aligned}
			\frac{\partial (g \circ F^{\beta\delta})(x)}{\partial x_{i_1,j_1}}= & (g^{\prime} \circ F_{\beta})(x) \pi_{i_1,j_1}(x), \\
			\frac{\partial^2 (g \circ F^{\beta\delta})(x)}{\partial x_{i_1,j_1}\partial x_{i_2,j_2}}= & (g^{\prime \prime} \circ F_{\beta})(x) \pi_{i_1,j_1}(x) \pi_{i_2,j_2}(x)+(g^{\prime} \circ F_{\beta})(x) \beta \omega_{i_1,j_1,i_2,j_2}(x), \\
			\frac{\partial^3 (g \circ F^{\beta\delta})(x)}{\partial x_{i_1,j_1}\partial x_{i_2,j_2}\partial x_{i_3,j_3}}= & (g^{\prime \prime \prime} \circ F_{\beta})(x) \pi_{i_1,j_1}(x) \pi_{i_2,j_2}(x)\pi_{i_3,j_3}(x)\\
			& +(g^{\prime \prime} \circ F_{\beta})(x) \beta(\omega_{i_1,j_1,i_2,j_2}(x)\pi_{i_3,j_3}(x)+\omega_{i_1,j_1,i_3,j_3}(x)\pi_{i_2,j_2}(x) \\
			& +\omega_{i_2,j_2,i_3,j_3}(x)\pi_{i_1,j_1}(x)) \\
			& +(g^{\prime} \circ F_{\beta})(x) \beta^{2} \gamma_{i_1,j_1,i_2,j_2,i_3,j_3}(x) .
		\end{aligned}$$ 
	\end{proof}
	
	In order to compare $\mathbb{E}[f(X)]$ and $\mathbb{E}[f(X^{\prime})]$ where $f=g \circ F^{\beta\delta}$. We construct two Gaussian random matrices $Y$ and $Y^{\prime}$ in $\mathbb{R}^{n \times m}$ with mean zero and the same covariance, i.e. $\mathbb{E}[Y_{i_1,j_1}]=\mathbb{E}[Y_{i_1,j_1}^{\prime}]=0$,  $\mathbb{E}[X_{i_1,j_1}X_{i_2,j_2}]=\mathbb{E}[Y_{i_1,j_1}Y_{i_2,j_2}]$ and $\mathbb{E}[X_{i_1,j_1}^{\prime}X_{i_2,j_2}^{\prime}]=\mathbb{E}[Y_{i_1,j_1}^{\prime}Y_{i_2,j_2}^{\prime}]$ for all $1 \leq i_1,i_2 \leq n$ and $1 \leq j_1,j_2 \leq m$. Therefore, through the triangle inequality,  we can approximate $f(X)-f(Y)$, $f(Y)-f(Y^{\prime})$ and $f(Y^{\prime})-f(X^{\prime})$ by Stein's method easily. The Lemmas \ref{lem2} and \ref{lem333} will enable us to control the $f(X)-f(Y)$, $f(Y^{\prime})-f(X^{\prime})$ and $f(Y)-f(Y^{\prime})$ separately.
	
\begin{lemma}(Elizabeth \cite{ref2})\label{lem2}
		Consider
		$$
		h(x)=\int_0^1 \frac{1}{2 t} \mathbb{E}\left[f\left(\sqrt{t} x+\sqrt{1-t} Y\right)-f\left(Y\right)\right] d t .
		$$

		Then we have for every smooth function $f$
		$$
		f(x)-\mathbb{E}\left[f\left(Y\right)\right]=\sum_{i=1}^{n}\sum_{j=1}^{m}x_{i,j} \frac{\partial h(x)}{\partial x_{i,j}}-\sum_{i_1,i_2=1}^{n}\sum_{j_1,j_2=1}^{m}x_{i_1,j_1}x_{i_2,j_2}\frac{\partial^2h(x)}{\partial x_{i_1,j_1}\partial x_{i_2,j_2}},
		$$
		and especially
		$$
		\mathbb{E}\left[f(X)-f\left(Y\right)\right]=\mathbb{E}\left[\sum_{i=1}^{n}\sum_{j=1}^{m}X_{i,j} \frac{\partial h(X)}{\partial x_{i_1,j_1}}-\sum_{i_1,i_2=1}^{n}\sum_{j_1,j_2=1}^{m}X_{i_1,j_1}X_{i_2,j_2}\frac{\partial^2h(X)}{\partial x_{i_1,j_1}\partial x_{i_2,j_2}}\right].
		$$
	\end{lemma}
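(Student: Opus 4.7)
The plan is Gaussian interpolation between $x$ and $Y$ followed by Gaussian integration by parts. Introduce the path $\Phi(t,x):=\mathbb{E}[f(\sqrt{t}\,x+\sqrt{1-t}\,Y)]$, note that $\Phi(1,x)=f(x)$ and $\Phi(0,x)=\mathbb{E}[f(Y)]$, and represent the left-hand side of the identity as $\int_{0}^{1}\partial_{t}\Phi(t,x)\,dt$.

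First I would differentiate under the expectation to obtain
\[\partial_{t}\Phi(t,x)=\frac{1}{2\sqrt{t}}\sum_{i,j}x_{i,j}\,\mathbb{E}\bigl[\partial_{i,j}f(\sqrt{t}\,x+\sqrt{1-t}\,Y)\bigr]-\frac{1}{2\sqrt{1-t}}\sum_{i,j}\mathbb{E}\bigl[Y_{i,j}\,\partial_{i,j}f(\sqrt{t}\,x+\sqrt{1-t}\,Y)\bigr].\]
Since $Y$ is centered Gaussian, the Stein/Gaussian integration-by-parts identity $\mathbb{E}[Y_{i,j}G(Y)]=\sum_{i',j'}\mathbb{E}[Y_{i,j}Y_{i',j'}]\,\mathbb{E}[\partial_{i',j'}G(Y)]$ applied with $G(Y)=\partial_{i,j}f(\sqrt{t}\,x+\sqrt{1-t}\,Y)$ produces an extra factor $\sqrt{1-t}$ that cancels the singular denominator and turns the second sum into one involving the covariance entries of $Y$ (equivalently, of $X$, by the matching-covariance choice of $Y$) and the second derivatives of $f$.

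Second, I would recognize the two resulting pieces of $\partial_{t}\Phi$ as $\sum_{i,j}x_{i,j}\,\partial_{i,j}h(x)$ and a covariance-weighted second-derivative sum, using
\[\frac{\partial h(x)}{\partial x_{i,j}}=\int_{0}^{1}\frac{1}{2\sqrt{t}}\mathbb{E}\bigl[\partial_{i,j}f(\sqrt{t}\,x+\sqrt{1-t}\,Y)\bigr]dt,\quad\frac{\partial^{2}h(x)}{\partial x_{i_{1},j_{1}}\partial x_{i_{2},j_{2}}}=\frac{1}{2}\int_{0}^{1}\mathbb{E}\bigl[\partial^{2}_{(i_{1},j_{1}),(i_{2},j_{2})}f(\sqrt{t}\,x+\sqrt{1-t}\,Y)\bigr]dt,\]
obtained by differentiating inside the integral in the definition of $h$. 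Integrating $\partial_{t}\Phi$ over $t\in[0,1]$ therefore yields the pointwise identity; the ``especially'' form follows by substituting $x=X$ in that identity, taking expectation, and using Fubini together with the independence of $X$ and $Y$, along with $\mathbb{E}[Y_{i_{1},j_{1}}Y_{i_{2},j_{2}}]=\mathbb{E}[X_{i_{1},j_{1}}X_{i_{2},j_{2}}]$, to rewrite the quadratic coefficients in terms of $X$.

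The main delicate point will be justifying the interchange of $\partial_{t}$ with the expectation and the Gaussian integration by parts near $t\in\{0,1\}$, where the kernels $1/\sqrt{t}$ and $1/\sqrt{1-t}$ are singular. The second singularity is absorbed by the $\sqrt{1-t}$ produced by Stein's identity, and the first is locally integrable; for $f=g\circ F^{\beta\delta}$ in the sequel the derivatives are uniformly controlled in view of Lemma~\ref{lem3} and the sup-norm estimates on $g^{\prime},g^{\prime\prime},g^{\prime\prime\prime}$ from Lemma~\ref{lem1}, so dominated convergence justifies all the swaps.
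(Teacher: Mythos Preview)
The paper does not supply a proof of this lemma at all; it is quoted from Meckes~\cite{ref2} and used as a black box. Your interpolation argument---writing $f(x)-\mathbb{E}[f(Y)]=\int_0^1\partial_t\Phi(t,x)\,dt$, differentiating, and applying Gaussian integration by parts to remove the $Y_{i,j}$ factor---is exactly the standard derivation from that reference, so the approach is correct and there is nothing to compare it against in the present paper.

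One remark on the output of your computation versus the printed statement. Your argument (correctly) produces
\[
f(x)-\mathbb{E}[f(Y)]=\sum_{i,j}x_{i,j}\,\frac{\partial h(x)}{\partial x_{i,j}}-\sum_{i_1,j_1,i_2,j_2}\mathbb{E}[Y_{i_1,j_1}Y_{i_2,j_2}]\,\frac{\partial^{2}h(x)}{\partial x_{i_1,j_1}\partial x_{i_2,j_2}},
\]
i.e.\ the second sum is weighted by the \emph{covariance entries} of $Y$, not by $x_{i_1,j_1}x_{i_2,j_2}$ as the lemma is typeset. The printed pointwise identity is in fact false (try $f(x)=x_{1,1}^{2}$), so this is a typo in the paper; the genuine Stein equation is the covariance-weighted one you derive. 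Your closing observation that $\mathbb{E}[Y_{i_1,j_1}Y_{i_2,j_2}]=\mathbb{E}[X_{i_1,j_1}X_{i_2,j_2}]$ is precisely what the application in Theorem~\ref{lem4} needs, and with that correction the exchangeable-pairs calculation there goes through.
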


\begin{lemma}[Van Handel \cite{ref3}, Vershynin \cite{ref7}]\label{lem333}
		 Let  $\xi \sim N\left(0, \Sigma^{\xi}\right)$  and  $\eta \sim N\left(0, \Sigma^{\eta}\right)$  be independent  n -dimensional Gaussian vectors, and define
		
		$$\zeta(t)=\sqrt{t}\xi +\sqrt{1-t} \eta, \quad t \in[0,1] .$$

		Then we have for every smooth function  $p$ 
		$$\frac{d}{d t} \mathbb{E}[p(\zeta(t))]=\frac{1}{2} \sum_{i, j=1}^{n}\left(\Sigma_{i j}^{\xi}-\Sigma_{i j}^{\eta}\right) \mathbb{E}\left[\frac{\partial^{2} p}{\partial x_{i} \partial x_{j}}(\zeta(t))\right] ,$$
		which means $$\begin{aligned} 
			\mathbb{E}[p(\xi)]-\mathbb{E}[p(\eta)]=&\mathbb{E}[p(\zeta(1))]-\mathbb{E}[p(\zeta(0))] \\
			=&\int_{0}^{1}\frac{1}{2}\sum_{i,j=1}^{n}(\Sigma_{ij}^{\xi}-\Sigma_{ij}^{\eta})\mathbb{E}\left[\frac{\partial^2 p}{\partial x_i \partial x_j}(\zeta (t)) \right]dt.\\  
		\end{aligned} $$
		
	\end{lemma}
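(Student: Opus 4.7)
The plan is to establish the stated derivative identity pointwise in $t$ and then integrate it. For fixed $t\in(0,1)$, the vector $\zeta(t)=\sqrt{t}\,\xi+\sqrt{1-t}\,\eta$ is Gaussian, and since $\xi$ and $\eta$ are independent I can differentiate $p(\zeta(t))$ pathwise and pass the derivative under the expectation (the smoothness of $p$ together with standard domination in $t$ bounded away from $0$ and $1$ justifies this). The chain rule gives
\[
\frac{d}{dt}\mathbb{E}[p(\zeta(t))]=\sum_{i=1}^{n}\mathbb{E}\!\left[\frac{\partial p}{\partial x_{i}}(\zeta(t))\left(\frac{\xi_{i}}{2\sqrt{t}}-\frac{\eta_{i}}{2\sqrt{1-t}}\right)\right].
\]

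Next I will apply Gaussian integration by parts (Stein's lemma): for a centered Gaussian vector $Z$ with covariance $\Sigma$ and a smooth $\varphi$ of suitable growth, $\mathbb{E}[Z_{i}\varphi(Z)]=\sum_{j}\Sigma_{ij}\,\mathbb{E}[\partial_{j}\varphi(Z)]$. I would condition on $\eta$ first so that $\zeta(t)=\sqrt{t}\,\xi+\text{const}$ is linear in $\xi$; consequently $\tfrac{\partial p}{\partial x_{i}}(\zeta(t))$ as a function of $\xi$ has $\xi_{k}$-derivative equal to $\sqrt{t}\,\tfrac{\partial^{2}p}{\partial x_{i}\partial x_{k}}(\zeta(t))$. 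Applying Stein's identity conditionally and then taking the outer expectation yields
\[
\mathbb{E}\!\left[\xi_{i}\,\frac{\partial p}{\partial x_{i}}(\zeta(t))\right]=\sqrt{t}\sum_{k}\Sigma^{\xi}_{ik}\,\mathbb{E}\!\left[\frac{\partial^{2}p}{\partial x_{i}\partial x_{k}}(\zeta(t))\right],
\]
and symmetrically, conditioning on $\xi$,
\[
\mathbb{E}\!\left[\eta_{i}\,\frac{\partial p}{\partial x_{i}}(\zeta(t))\right]=\sqrt{1-t}\sum_{k}\Sigma^{\eta}_{ik}\,\mathbb{E}\!\left[\frac{\partial^{2}p}{\partial x_{i}\partial x_{k}}(\zeta(t))\right].
\]

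Substituting these two identities into the chain-rule expression, the factors $\sqrt{t}/(2\sqrt{t})$ and $\sqrt{1-t}/(2\sqrt{1-t})$ collapse to $1/2$, producing exactly the claimed derivative formula. The integrated version then follows from the fundamental theorem of calculus on $[0,1]$ together with the boundary identifications $\zeta(0)=\eta$ and $\zeta(1)=\xi$. The main (purely technical) obstacle is the apparent singularities $1/(2\sqrt{t})$ and $1/(2\sqrt{1-t})$ at the endpoints; these are precisely cancelled by the $\sqrt{t}$ and $\sqrt{1-t}$ factors that emerge from Gaussian integration by parts, so after the cancellation the integrand is bounded on $[0,1]$ and dominated convergence legitimises both the differentiation under the expectation and the integration back to the global identity.
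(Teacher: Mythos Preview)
Your argument is correct and is precisely the standard Gaussian-interpolation proof: differentiate under the expectation via the chain rule, then apply Stein's integration-by-parts identity conditionally to convert the first-order terms in $\xi$ and $\eta$ into second-order terms, after which the endpoint-singular factors $1/(2\sqrt{t})$ and $1/(2\sqrt{1-t})$ cancel against the $\sqrt{t}$ and $\sqrt{1-t}$ produced by the inner chain rule. Note, however, that the paper does not supply its own proof of this lemma: it is simply quoted from the cited references (Van Handel's lecture notes and Vershynin's book), where exactly your argument appears. So there is nothing to compare---you have reproduced the proof those references give.
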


%We state a new theorem that plays a key role in our proof.
	
	\begin{theorem}\label{lem4}
		
		Suppose that assumptions are as above. We state that $$|\mathbb{E}[f(X)-f(X^{\prime})]|\leq\frac{C\beta(1+\delta)}{\tau}(B_1+B_1^{\prime}+B_3+\beta(1+\delta)(B_2+B_2^{\prime})).$$
	\end{theorem}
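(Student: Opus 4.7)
The plan is to sandwich $X$ and $X'$ between two Gaussian proxies and then invoke the lemmas already at our disposal. First I would introduce $n \times m$ Gaussian matrices $Y$ and $Y'$, independent of everything else, with mean zero and the same covariance structures as $X$ and $X'$ respectively, so that $\mathbb{E}[Y_{i_1,j_1}Y_{i_2,j_2}]=\mathbb{E}[X_{i_1,j_1}X_{i_2,j_2}]$ and similarly for $Y'$ and $X'$. The triangle inequality then reduces the problem to estimating the three pieces
\[
|\mathbb{E}[f(X) - f(X')]| \leq |\mathbb{E}[f(X) - f(Y)]| + |\mathbb{E}[f(Y) - f(Y')]| + |\mathbb{E}[f(Y') - f(X')]|
\]
separately.

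The Gaussian-to-Gaussian term falls out directly from Lemma \ref{lem333}: the interpolation identity represents $\mathbb{E}[f(Y) - f(Y')]$ as $\tfrac{1}{2}\int_0^1 \sum (\Sigma^Y - \Sigma^{Y'})_{i_1 j_1, i_2 j_2}\,\mathbb{E}[\partial^2_{i_1 j_1, i_2 j_2} f(\zeta(t))]\,dt$. Factoring out the uniform covariance discrepancy, which equals $B_3$ by construction, and applying the second-derivative estimate of Lemma \ref{lem3} together with the $g$-bounds from Lemma \ref{lem1}, yields a contribution of order $\beta(1+\delta)B_3/\tau$.

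The two hybrid terms are the core of the argument, and I would treat them symmetrically; let me describe $|\mathbb{E}[f(X) - f(Y)]|$. Invoke the Meckes-type representation of Lemma \ref{lem2} and decompose the quadratic piece as
\[
X_{i_1,j_1}X_{i_2,j_2}\,\partial^2 h(X) = \mathbb{E}[X_{i_1,j_1}X_{i_2,j_2}]\,\partial^2 h(X) + \bigl(X_{i_1,j_1}X_{i_2,j_2} - \mathbb{E}[X_{i_1,j_1}X_{i_2,j_2}]\bigr)\partial^2 h(X).
\]
The centered part is bounded pathwise by $\max_{i_1,j_1,i_2,j_2}|X_{i_1,j_1}X_{i_2,j_2} - \mathbb{E}[X_{i_1,j_1}X_{i_2,j_2}]| \cdot \sum |\partial^2 h(X)|$, and since the integral representation of $h$ transfers the sum-of-derivatives bounds of Lemma \ref{lem3} from $f$ to $h$ without loss of scaling, taking expectations produces the $B_1$ contribution at scale $\beta(1+\delta)/\tau$. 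The residual combination $\mathbb{E}[\sum X_{i,j}\partial_{i,j}h(X)] - \sum \mathbb{E}[X_{i_1,j_1}X_{i_2,j_2}]\mathbb{E}[\partial^2_{i_1 j_1, i_2 j_2}h(X)]$ is exactly where moment matching enters: Taylor expanding the partial derivatives of $h$ about $0$ (or, equivalently, exploiting the explicit integral form of $h$ together with Gaussian integration by parts in the $Y$-variable) makes the zeroth- and first-order terms cancel because $\mathbb{E}[X]=0$ and $\mathrm{Cov}(X)=\mathrm{Cov}(Y)$, leaving a third-order remainder bounded by $B_2$ times $\sum|\partial^3 h|$; invoking the third-derivative estimate of Lemma \ref{lem3} and the bound $\|g'''\|_\infty \leq C\beta^2(1+\delta)^2/\tau$ yields a term of order $\beta^2(1+\delta)^2 B_2/\tau$. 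The identical reasoning applied to the pair $(Y', X')$ supplies the $B_1'$ and $B_2'$ contributions.

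Summing the three bounds reproduces the stated inequality. The step I expect to be the main obstacle is the moment-matching cancellation inside each hybrid term: one must execute the Taylor expansion of the partials of $h$ carefully enough that the leading terms really do cancel against $\sum \mathbb{E}[X_{i_1,j_1}X_{i_2,j_2}]\mathbb{E}[\partial^2 h(X)]$ with only a genuine third-moment remainder surviving, and one must simultaneously verify that the sum-of-absolute-derivatives estimates transfer from $f$ to $h$ retaining the $\beta(1+\delta)/\tau$ and $\beta^2(1+\delta)^2/\tau$ scalings that Lemma \ref{lem3} supplies for $f$.
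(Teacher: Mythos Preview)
Your overall architecture matches the paper exactly: introduce Gaussian proxies $Y,Y'$ with matching covariances, split by the triangle inequality, and handle the Gaussian-to-Gaussian piece via the interpolation of Lemma~\ref{lem333} together with Lemma~\ref{lem3}. That part is fine.

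The divergence is in the hybrid term $|\mathbb{E}[f(X)-f(Y)]|$, and here there is a genuine gap. First, the Stein identity of Lemma~\ref{lem2} (correctly stated) carries the \emph{covariance} $\mathbb{E}[X_{i_1,j_1}X_{i_2,j_2}]$ in the second-order term, not the random product $X_{i_1,j_1}X_{i_2,j_2}$; the version printed in the paper is a typo. Consequently there is no ``quadratic piece $X_{i_1,j_1}X_{i_2,j_2}\,\partial^2 h(X)$'' to split as you propose, and $B_1$ does not fall out that way. Second, your fallback of Taylor-expanding $\partial h$ about $0$ would control the residual by moment quantities, but it does not naturally isolate the specific centered product $\max|X_{i_1,j_1}X_{i_2,j_2}-\mathbb{E}[X_{i_1,j_1}X_{i_2,j_2}]|$ that defines $B_1$; you would end up with a different collection of bounds.

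The paper closes this gap with an exchangeable-pair device: take an independent copy $\bar X$ of $X$, set $\Delta=\bar X-X$, and Taylor-expand the trivial identity $0=\mathbb{E}[h(\bar X)-h(X)]$ to second order about $X$, with third-order remainder $R_3$. The conditional moments $\mathbb{E}[\Delta_{i,j}\mid X]=-X_{i,j}$ and $\mathbb{E}[\Delta_{i_1,j_1}\Delta_{i_2,j_2}\mid X]=\mathbb{E}[X_{i_1,j_1}X_{i_2,j_2}]+X_{i_1,j_1}X_{i_2,j_2}$ then combine with the (correct) Stein equation to give
\[
\mathbb{E}[f(X)-f(Y)]=\mathbb{E}[R_3]+\tfrac12\,\mathbb{E}\Bigl[\sum_{i_1,i_2,j_1,j_2}\bigl(\mathbb{E}[X_{i_1,j_1}X_{i_2,j_2}]-X_{i_1,j_1}X_{i_2,j_2}\bigr)\partial^2_{i_1 j_1,i_2 j_2}h(X)\Bigr].
\]
The second term is bounded by $B_1$ times the sum-of-second-derivatives estimate (your transfer of the Lemma~\ref{lem3} bounds from $f$ to $h$ is correct), and $R_3$ is a genuine third-order Taylor remainder in $\Delta$, bounded by $\mathbb{E}[\max_{i,j}|\Delta_{i,j}|^3]\le C\,\mathbb{E}[\max_{i,j}|X_{i,j}|^3]=C B_2$ times the third-derivative bound. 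This is the missing idea that produces exactly the $B_1,B_2$ quantities in the statement.
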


	\begin{proof}[Proof of Theorem \ref{lem4}]
	
	We approximate $|\mathbb{E}[f(X)-f(Y)]|$ and $|\mathbb{E}[f(X^{\prime})-f(Y^{\prime})]|$ priority. Let $\bar{X}$ be an independent copy of $X$, write $\Delta_{i,j}:=\bar{X}_{i,j}-X_{i,j}$.
	
	By considering independence, we have:
	$$\mathbb{E}[\Delta_{i,j}|X]=\mathbb{E}[\bar{X}_{i,j}-X_{i,j}|X]=-X_{i,j}.$$
	Similarly, 
	$$ \begin{aligned}
		\mathbb{E}[\Delta_{i_1,j_1}\Delta_{i_2,j_2}|X]=&\mathbb{E}[(\bar{X}_{i_1,j_1}-X_{i_1,j_1})(\bar{X}_{i_2,j_2}-X_{i_2,j_2})|X]\\
		=&\mathbb{E}[X_{i_1,j_1}X_{i_2,j_2}]+X_{i_1,j_1}X_{i_2,j_2}.
 \end{aligned}$$
	
	Let
	$$R_3:=h(\bar{X})-h(X)-\sum_{i=1}^{n}\sum_{j=1}^{m}\Delta_{i,j}\frac{\partial h(X)}{\partial x_{i,j}}-\frac{1}{2}\sum_{i_1,i_2=1}^{n}\sum_{j_1,j_2=1}^{m}\Delta_{i_1,j_1}\Delta_{i_2,j_2}\frac{\partial^2h(X)}{\partial x_{i_1,j_1}\partial x_{i_2,j_2}}.$$
	
	Then we obtain that
$$	
\begin{aligned}
		0&=\mathbb{E}[h(\bar{X})-h(X)]\\
		&=\mathbb{E}[R_3+\sum_{i=1}^{n}\sum_{j=1}^{m}\Delta_{i,j}\frac{\partial h(X)}{\partial x_{i,j}}+\frac{1}{2}\sum_{i_1,i_2=1}^{n}\sum_{j_1,j_2=1}^{m}\Delta_{i_1,j_1}\Delta_{i_2,j_2}\frac{\partial^2h(X)}{\partial x_{i_1,j_1}\partial x_{i_2,j_2}}]\\
		&=\mathbb{E}[R_3+\sum_{i=1}^{n}\sum_{j=1}^{m}\mathbb{E}[\Delta_{i,j}|X]\frac{\partial h(X)}{\partial x_{i,j}}+\frac{1}{2}\sum_{i_1,i_2=1}^{n}\sum_{j_1,j_2=1}^{m}\mathbb{E}[\Delta_{i_1,j_1}\Delta_{i_2,j_2}|X]\frac{\partial^2h(X)}{\partial x_{i_1,j_1}\partial x_{i_2,j_2}}]\\
		&=\mathbb{E}[R_3-\sum_{i=1}^{n}\sum_{j=1}^{m} X_{i,j} \frac{\partial h(X)}{\partial x_{i,j}}+\frac{1}{2}\sum_{i_1,i_2=1}^{n}\sum_{j_1,j_2=1}^{m}(\mathbb{E}[X_{i_1,j_1}X_{i_2,j_2}]+X_{i_1,j_1}X_{i_2,j_2})\frac{\partial^2h(X)}{\partial x_{i_1,j_1}\partial x_{i_2,j_2}}]\\
		&=\mathbb{E}[R_3+f(Y)-f(X)+\frac{1}{2}\sum_{i_1,i_2=1}^{n}\sum_{j_1,j_2=1}^{m}(\mathbb{E}[X_{i_1,j_1}X_{i_2,j_2}]-X_{i_1,j_1}X_{i_2,j_2})\frac{\partial^2h(X)}{\partial x_{i_1,j_1}\partial x_{i_2,j_2}}],\\
	\end{aligned}
$$
	where the last equation is by the Lemma \ref{lem2}, thus,
	$$\mathbb{E}[f(X)-f(Y)]=\mathbb{E}[R_3+\frac{1}{2}\sum_{i_1,i_2=1}^{n}\sum_{j_1,j_2=1}^{m}(\mathbb{E}[X_{i_1,j_1}X_{i_2,j_2}]-X_{i_1,j_1}X_{i_2,j_2})\frac{\partial^2h(X)}{\partial x_{i_1,j_1}\partial x_{i_2,j_2}}].$$
	
	Using Lemma \ref{lem3}, we derive that
	$$\begin{aligned}
		&|\mathbb{E}[\sum_{i_1,i_2=1}^{n}\sum_{j_1,j_2=1}^{m}(\mathbb{E}[X_{i_1,j_1}X_{i_2,j_2}]-X_{i_1,j_1}X_{i_2,j_2})\frac{\partial^2h(X)}{\partial x_{i_1,j_1}\partial x_{i_2,j_2}}]|\\
		\leq&\mathbb{E}[\max\limits_{i_1,j_1,i_2,j_2}|X_{i_1,j_1}X_{i_2,j_2}-\mathbb{E}[X_{i_1,j_1}X_{i_2,j_2}]|]\sum_{i_1,i_2=1}^{n}\sum_{j_1,j_2=1}^{m}|\frac{\partial^2h(X)}{\partial x_{i_1,j_1}\partial x_{i_2,j_2}}|\quad \text{(by (\ref{2}))}\\
		\leq& \frac{C\beta(1+\delta)}{\tau}B_1.
	\end{aligned}$$
	
	Let $\theta$ be a random variable distributed uniformly on the interval $(0,1)$, independent of all other variables. Consider $R_3$ as the error in the Taylor approximation. Then one has
	
	$$\begin{aligned}
		|\mathbb{E}[R_3]|=&|\mathbb{E}[\frac{1}{2}\sum_{i_1,i_2,i_3=1}^{n}\sum_{j_1,j_2,j_3=1}^{m}\Delta_{i_1,j_1}\Delta_{i_2,j_2}\Delta_{i_3,j_3}(1-\theta)^2\frac{\partial^3 h}{\partial x_{i_1,j_1}\partial x_{i_2,j_2}\partial x_{i_3,j_3}}((1-\theta)X+\theta \bar{X})]|\\
		\leq&\frac{1}{2}\mathbb{E}[|\sum_{i_1,i_2,i_3=1}^{n}\sum_{j_1,j_2,j_3=1}^{m}\Delta_{i_1,j_1}\Delta_{i_2,j_2}\Delta_{i_3,j_3}\frac{\partial^3 h}{\partial x_{i_1,j_1}\partial x_{i_2,j_2}\partial x_{i_3,j_3}}((1-\theta)X+\theta \bar{X})|]\\
		\leq&\frac{1}{2}\mathbb{E}[\max\limits_{i_1,i_2,i_3,j_1,j_2,j_3}|\Delta_{i_1,j_1}\Delta_{i_2,j_2}\Delta_{i_3,j_3}|]\times \max_{t}\sum_{i_1,i_2,i_3=1}^{n}\sum_{j_1,j_2,j_3=1}^{m}|\frac{\partial^3 h}{\partial x_{i_1,j_1}\partial x_{i_2,j_2}\partial x_{i_3,j_3}}((1-t)X+t \bar{X})|\\
		\leq&\frac{C\beta^2(1+\delta)^2}{\tau}\mathbb{E}[\max\limits_{i_1,i_2,i_3,j_1,j_2,j_3}|\Delta_{i_1,j_1}\Delta_{i_2,j_2}\Delta_{i_3,j_3}|]\quad \text{(by (\ref{3}))}\\
		\leq&\frac{C\beta^2(1+\delta)^2}{\tau}\mathbb{E}[\max\limits_{i,j}|\Delta_{ij}^3|] \\
		\leq&\frac{C\beta^2(1+\delta)^2}{\tau}\mathbb{E}[\max\limits_{i,j}|X_{ij}^3|] \quad \text{(by symmetry)}\\
		\leq&\frac{C\beta^2(1+\delta)^2}{\tau}B_2.\\
	\end{aligned}$$ 
	
	We get
	\begin{align}\label{5}
			|\mathbb{E}[f(X)-f(Y)]|\leq \frac{C\beta(1+\delta)}{\tau}(B_1+\beta(1+\delta)B_2),
	\end{align}
	and
	\begin{align}\label{6}
	|\mathbb{E}[f(X^{\prime})-f(Y^{\prime})]|\leq \frac{C\beta(1+\delta)}{\tau}(B_1^{\prime}+\beta(1+\delta)B_2^{\prime}).
		\end{align}
	
	As for the approximation of $|\mathbb{E}[f(Y)-f(Y^{\prime})]|$, we introduce the following procedure. Consider $Y$ and $Y^{\prime}$ are Gaussian matrices, by the lemma \ref{lem333}, we have
	
\begin{align}\label{7}
		&|\mathbb{E}[f(Y)-f(Y^{\prime})]|\\
		=&\lvert\frac{1}{2}\int_{0}^{1}\sum_{i_1,i_2=1}^{n}\sum_{j_1,j_2=1}^{m}\mathbb{E}[Y_{i_1,j_1}Y_{i_2,j_2}-Y_{i_1,j_1}^{\prime}Y_{i_2,j_2}^{\prime}]\mathbb{E}\left[\frac{\partial^2f(\sqrt{t} Y+\sqrt{1-t}Y^\prime)}{\partial x_{i_1,j_1}\partial x_{i_2,j_2}}dt\right]\rvert\nonumber\\
		\leq&\max\limits_{i_1,i_2,j_1,j_2}|\mathbb{E}[Y_{i_1,j_1}Y_{i_2,j_2}-Y_{i_1,j_1}^{\prime}Y_{i_2,j_2}^{\prime}]|\max_{t}\sum_{i_1,i_2=1}^{n}\sum_{j_1,j_2=1}^{m}|\frac{\partial^2f(\sqrt{t} Y+\sqrt{1-t}Y^\prime)}{\partial x_{i_1,j_1}\partial x_{i_2,j_2}}|_{}\nonumber\\
		\leq&\frac{C\beta(1+\delta)}{\tau}\max\limits_{i_1,i_2,j_1,j_2}|\mathbb{E}[X_{i_1,j_1}X_{i_2,j_2}-X_{i_1,j_1}^{\prime}X_{i_2,j_2}^{\prime}]|\quad \text{(by (\ref{3}))}\nonumber\\
		\leq&\frac{C\beta(1+\delta)}{\tau}B_3.\nonumber
	\end{align}
	By applying the triangle inequality to inequality (\ref{5}) ,  (\ref{6}) and (\ref{7}), we can obtain
	\begin{align}\label{9}
		&\\
		|\mathbb{E}[f(X)-f(X^{\prime})]|\leq&|\mathbb{E}[f(X)-f(Y)]|+|\mathbb{E}[f(Y)-f(Y^{\prime})]|+|\mathbb{E}[f(Y^{\prime})-f(X^{\prime})]|\nonumber\\ \leq&\frac{C\beta(1+\delta)}{\tau}(B_1+B_1^{\prime}+B_3+\beta(1+\delta)(B_2+B_2^{\prime})).\nonumber
	\end{align}

	\end{proof}
	
	\begin{proof}[Proof of Theorem \ref{thm1}]

	Combine (\ref{8}), (\ref{4}), (\ref{10}) and (\ref{9}), one has
	
	$$\begin{aligned}
		&\mathbb{P}(\min\limits_{1 \leq i \leq n}\max\limits_{1 \leq j \leq m}X_{i,j} \in A)\\
		\leq&(1-\varepsilon)^{-1} \mathbb{E}[g \circ F^{\beta\delta}(X)]\\
		\leq&\frac{\mathbb{E}[g \circ F^{\beta\delta}(X^{\prime})]+C\beta(1+\delta)\tau^{-1}(B_1+B_1^{\prime}+B_3+\beta(1+\delta)(B_2+B_2^{\prime}))}{1-\varepsilon}\\
		\leq&\mathbb{P}(g \circ F^{\beta\delta}(X^{\prime}) \in A^{\lambda+3\tau})+\frac{\varepsilon+C\beta(1+\delta)\tau^{-1}(B_1+B_1^{\prime}+B_3+\beta(1+\delta)(B_2+B_2^{\prime}))}{1-\varepsilon}\\
		\leq&\mathbb{P}(\min\limits_{1 \leq i \leq n}\max\limits_{1 \leq j \leq m}X^{\prime} \in A^{2\lambda+3\tau})+\frac{\varepsilon+C\beta(1+\delta)\tau^{-1}(B_1+B_1^{\prime}+B_3+\beta(1+\delta)(B_2+B_2^{\prime}))}{1-\varepsilon}.\\	\end{aligned}$$
		 By  Lemma \ref{kk4.2}, we complete our proof.
	\end{proof}
\section{discussion}
	In this paper, we have obtained a new inequality between two random matrices for which the third moments exist, which can be regarded as a generalization of the quantitative version of the Gordon inequality. In future work, we will attempt to quantify the distribution bounds of minmax of two random matrices by assuming only the existence of the third-order moment for one random matrix $X$, and the other random matrix $F$ satisfies
$$
\sum_{q \geq 1} q q!\left\|f_q\right\|_{\mathfrak{S}^{\otimes q}}^2<\infty,
$$
where $f_q$ is an element of the symmetric $q$ th tensor product $\mathfrak{H}^{\odot q}$ (which is uniquely determined by $F)$,  i.e. the upper bound for $\mathbb{P}(\min \max F \leq x)-\mathbb{P}(\min \max X \leq x)$ where $x \in \mathbb{R}$.

\end{document}